\begin{document}

\title*{Tight Heffter arrays from finite fields}
% Use \titlerunning{Short Title} for an abbreviated version of
% your contribution title if the original one is too long
\author{Marco Buratti}
% Use \authorrunning{Short Title} for an abbreviated version of
% your contribution title if the original one is too long
\institute{Marco Buratti \at Dipartimento di Scienze di Base e Applicate per l'Ingegneria (S.B.A.I.), 
Universit\`a Sapienza di Roma, Italy, \email{marco.buratti@uniroma1.it}}
%
% Use the package "url.sty" to avoid
% problems with special characters
% used in your e-mail or web address
%
\maketitle

\begin{center}{\sl Dedicated to Doug Stinson on the occasion of his 66th birthday.}\end{center}

\abstract*{After extending in the obvious way
the classic notion of a tight Heffter array H$(m,n)$ to any group of order $2mn+1$,
we give direct constructions for elementary abelian tight Heffter arrays, hence in
particular for prime tight Heffter arrays (whose existence was already known).
If $q=2mn+1$ is a prime power,
we say that an elementary abelian H$(m,n)$ is ``over $\mathbb{F}_q$" since, for its construction, 
we exploit both the additive and multiplicative structure of the field of order $q$. 
We show that in many cases a direct construction of an H$(m,n)$ over $\mathbb{F}_q$, say $A$,  
can be obtained very easily by imposing that $A$ has rank 1 and, possibly, a rich group of {\it multipliers},
that are elements $u$ of $\F_q$ such that $u A=A$ up to 
a permutation of rows and columns. An H$(m,n)$ over $\mathbb{F}_q$ will be
said {\it optimal} if the order of its group of multipliers is the least common
multiple of the odd parts of $m$ and $n$, since this is the maximum possible
order for it.
The main result is an explicit construction of a rank-one H$(m,n)$ --
reaching almost always the optimality --
for all admissible pairs $(m,n)$ for which there exist two distinct
odd primes $p$, $p'$ dividing $m$ and $n$, respectively.}

\abstract{After extending the classic notion of a tight Heffter array H$(m,n)$ to any group of order $2mn+1$,
we give direct constructions for elementary abelian tight Heffter arrays, hence in
particular for prime tight Heffter arrays.
If $q=2mn+1$ is a prime power,
we say that an elementary abelian H$(m,n)$ is ``over $\mathbb{F}_q$" since, for its construction, 
we exploit both the additive and multiplicative structure of the field of order $q$.
We show that in many cases a direct construction of an H$(m,n)$ over $\mathbb{F}_q$, say $A$, 
can be obtained very easily by imposing that $A$ has rank 1 and, possibly, a rich group of {\it multipliers},
that are elements $u$ of $\mathbb{F}_q$ such that $u A=A$ up to a permutation of rows and columns. 
An H$(m,n)$ over $\mathbb{F}_q$ will be said {\it optimal} if the order of its group of multipliers is the least 
common multiple of the odd parts of $m$ and $n$, since this is the maximum possible
order for it. The main result is an explicit construction of a rank-one H$(m,n)$ --
reaching almost always the optimality --
for all admissible pairs $(m,n)$ for which there exist two distinct
odd primes $p$, $p'$ dividing $m$ and $n$, respectively.}

\section{Introduction}
Heffter arrays are interesting combinatorial designs introduced by Archdeacon \cite{A} in 2015. 
For a rich survey on the many variations and the related results we refer to \cite{A&J}.
In this note we consider Heffter arrays in the original meaning of ``tight" Heffter arrays extending the definition
to any group of odd order as proposed in \cite{CPP}.

 A {\it half-set} of an additive group $G$ of odd order $2\ell+1$ is a $\ell$-subset $L$ of $G$ such that $L \ \cup \ -L=G\setminus\{0\}$, i.e.,
 a complete set of representatives for the so-called {\it patterned starter} of $G$.
A subset or multisubset $S$ of an additive abelian group $G$ will be said {\it zero-sum} if the sum of all elements of $S$ is zero. This can be extended to
non-abelian groups saying that an \underline{ordered} subset or multisubset $S=\{s_1,\dots, s_k\}$ of $G$ is zero-sum if $s_1+...+s_k=0$.
 \begin{definition}
 A Heffter array H$(m,n)$ over an additive group $G$ of order $2mn+1$ is a $m\times n$ matrix whose entries form a half-set of 
 $G$ and whose rows and columns are all zero-sum.
 \end{definition} 
{\it Additive designs} have been introduced in \cite{CFP}
as {\it block designs} whose point set $V$ is a subset of an abelian group $G$ and whose blocks are all zero-sum. 
They are {\it strictly} $G$-additive when $V$ is the whole $G$.
The concrete construction of a strictly $G$-additive Steiner 2-design in which $G$ is not elementary abelian 
is one of the many hard problems concerning this interesting theory \cite{BN}.
The theory can be extended to other combinatorial designs. For instance, a Heffter array H$(n,n)$ can be seen as an additive {\it resolvable}
$(n^2_2,(2n)_n)$-{\it configuration} (see \cite{BP}).
 
A Heffter array is {\it cyclic} or {\it elementary abelian} if it is over a cyclic or elementary abelian group, respectively.
If an H$(m,n)$ is elementary abelian, then $q=2mn+1$ is obviously a prime power and we say that the array is over
$\mathbb{F}_q$, that is the field of order $q$. 

The existence problem for a cyclic H$(m,n)$ has been completely solved \cite{ABD}. On the other hand the solution
has been obtained via recursive methods and, as far as we are aware, just a few direct constructions are known.
In this note we will give direct constructions for infinitely many elementary abelian H$(m,n)$. Thus, in particular,
for infinitely many prime H$(m,n)$.

We get our constructions by imposing, first of all, that the arrays are {\it rank-one}, i.e., of rank equal to 1 in the usual sense of linear algebra.
Thus, a Heffter array H$(m,n)$ over $\mathbb{F}_q$ is rank-one if its rows are all multiples of a 
non-zero vector of $\mathbb{F}_q^n$ (and consequently its columns are all multiples of a non-zero vector 
of $\mathbb{F}_q^m$). 

\begin{example}\label{19}
It is readily seen that the following is a rank-one H$(3,3)$ over $\mathbb{Z}_{19}$.
$$A=\begin{pmatrix}
1 & 3 & -4\cr 
7 & 2 & -9\cr 
-8 & -5 & -6\cr
\end{pmatrix}$$
\end{example}

\begin{example}\label{25}
Let $g$ be a root of the primitive polynomial $x^2+x+2$ over $\mathbb{F}_{5}$. 
Then the following is a rank-one H$(3,4)$ over $\mathbb{F}_{25}$.
$$A=\begin{pmatrix}
1 && g && g+4 && 3g\cr
3g+1 && 3g+4 && 3 && 4g+2\cr
2g+3 && g+1 && 4g+3 && 3g+3
\end{pmatrix}$$
\end{example}

In \cite{CDY} Cavenagh et al. propose a definition of {\it equivalent} Heffter arrays. 
On the other hand, as far as we are aware, there is no ``official" definition of {\it isomorphic} Heffter arrays. 
From the perspective of the classic design theory, we think it is natural to give the following.
\begin{definition}
An {\it isomorphism} between a Heffter array $A$ over a group $G$ and a Heffter array $A'$ over a group $G'$
is a group isomorphism $\phi$ between $G$ and $G'$ such that $\phi(A)$ or its transposal 
can be obtained from $A'$ by suitable permutations of its rows and columns.

Thus, in particular, an {\it automorphism} of a Heffter array $A$ over a group $G$ is an automorphism of the group $G$
mapping $A$ or $A^T$ into a matrix obtainable from $A$ itself by permuting its rows and columns.

Two Heffter arrays are isomorphic if there exists an isomorphism between them.
\end{definition}
By saying that a Heffter array $A$ is over a ring $R$ we will mean that it is over the additive group of $R$.
If $R$ is with identity and $u$ is a unit of $R$, it is clear that $A$ and $uA$ are isomorphic
since the map $\widehat u: x \in R \longrightarrow ux\in R$ is an isomorphism between them.
On the other hand it is not said that $\widehat u$ is an automorphism of $A$. In the event this happens we will
say that $u$ is a multiplier of $A$.

\begin{definition}
Let $A$ be a Heffter array over a ring $R$ with identity. 
A {\it multiplier} of $A$ is any unit $u$ of $R$ such that 
$uA$ or $uA^T$ can be obtained from $A$ by suitable permutations of its rows and columns.
\end{definition}

It is evident that if $u$ is a multiplier of a Heffter array $A$, then the set of entries of $A$ and
$uA$ coincide. Thus $-1$ is never a multiplier since, by the definition of a half-set, the set of entries 
of $-A$ is exactly the complement of the set of entries of $A$. Note, however, that $A$ and $-A$ are 
equivalent in the sense of Cavenagh et al. \cite{CDY}.

The set $M$ of all multipliers of a Heffter array over a ring $R$ with identity clearly form a subgroup of the group $U(R)$
of units of $R$.
For instance, it is easy to see that the group of multipliers of the H$(3,3)$ of Example \ref{19} is $\{1,7,11\}$ 
and that the group of multipliers of the H$(3,4)$ of Example \ref{25} is $\{1,3g+1,2g+3\}$.

\medskip
Starting from the fundamental paper by R.M. Wilson \cite{W} -- or even from the earlier 
paper by R.C. Bose \cite{Bose} -- arriving to very recent 
work by the present author et al. \cite{BBGRT} the search for {\it difference families} and all their variants (a well-know topic of design theory) 
has been made easier by imposing ``many" multipliers. The same can be applied to rank-one Heffter arrays.
Indeed all the rank-one Heffter arrays constructed in this paper have a non-trivial group of multipliers.

Throughout the paper, saying that a pair $(m,n)$ is {\it admissible} we mean that $2mn+1$ is a prime power and that
both $m$ and $n$ are greater than 2. Also, speaking of ``a rank-one Heffter array H$(m,n)$" it will be understood that
$(m,n)$ is admissible and that the array is over $\mathbb{F}_{2mn+1}$.

The paper will be organized as follows. 

In the next section we will give some very elementary prerequisites that are necessary to understand the paper. 

In Section 3 it is proved that the existence of a rank-one H$(m,n)$
is completely equivalent to a factorization of a half-set of $\mathbb{F}_{2mn+1}$ into the product $X\cdot Y$ of a zero-sum 
$m$-set $X$ by a zero-sum $n$-set $Y$. As a corollary we get the easiest and nicest construction of a
rank-one H$(m,n)$: if $(m,n)$ is admissible with $m$, $n$ odd and coprime, then the $m\times n$ array $[r^{2ni+2mj}]$ where $r$ 
is a primitive element of $\mathbb{F}_{2mn+1}$, is a rank-one H$(m,n)$ whose multipliers are the non-zero squares of $\mathbb{F}_{2mn+1}$.

In Section 4 we prove that the order $\mu$ of the group of multipliers of a rank-one H$(m,n)$
is at most equal to the least common multiple of the odd parts of $m$ and $n$.
So we say that a rank-one H$(m,n)$ is {\it optimal} when $\mu$ reaches this value. 
In particular, we say that it is {\it perfect} when $\mu=mn$. The perfect rank-one Heffter arrays are
precisely the ``easiest and nicest" obtained in Section 2 and we prove that they have the useful property 
of being {\it globally simple} in Section 5.

In Section 6, using cyclotomy, we give our main result that is an explicit construction of a rank-one
H$(m,n)$ for every admissible pair $(m,n)$ such that there exist two distinct
odd primes $p$, $p'$ dividing $m$ and $n$, respectively. We also prove that this construction reaches optimality
unless $m$ and $n$ have the same radical $r$ and either $mr\over n$ or $nr\over m$ is an integer.

\section{Preliminaries}
We will need some elementary facts about {\it cyclotomy} and the related standard notation/terminology.
Given a prime power $q=de+1$, the multiplicative group of $\mathbb{F}_q$ will be denoted by $\mathbb{F}_q^*$
and the subgroup of $\mathbb{F}_q^*$ of index $e$ -- or, equivalently, the subgroup of $\mathbb{F}_q^*$
of order $d$ -- will be denoted by $C^e$. If $r$ is a primitive element of $\mathbb{F}_q^*$, then the set of cosets of $C^e$ in $\mathbb{F}_q^*$ 
(the so-called {\it cyclotomic classes of index $e$}) is $\{r^iC^e \ | \ 0\leq i\leq e-1\}$. 
It will be always understood that $r$ is fixed and, as it is standard, the coset $r^iC^e$ 
will be denoted by $C^e_i$ whichever is $i$. 
Note that we have $C^e_i=C^e_j$ if and only if $i\equiv j$ (mod $e$) and that $C^{e}_i\cdot C^e_j=C^e_{i+j}$.
Anyone who has a little bit of familiarity with finite fields should know the elementary facts below that
we recall for convenience.

\begin{lemma}\label{lemma}
Let $q$ be a prime power. Then we have:
\begin{itemize}
\item[(i)]\quad Every union of cosets of a non-trivial subgroup of $\mathbb{F}_q^*$ is zero-sum.
\item[(ii)] \quad If $q=2ed+1$ with $d$ odd, then $-1\in C^{2e}_e$ so that 
$C^{2e}_0 \ \cup \ C^{2e}_1 \ \cup \ \dots \ \cup \ C^{2e}_{e-1}$ is a half-set of $\mathbb{F}_q$.
\item[$(ii)'$] \quad
In particular, if $q=2d+1$ with $d$ odd, then $-1\in C^2_1$ and $C^2$ is a half-set of $\mathbb{F}_q$. 
\item[(iii)]\quad The product of two subgroups of $\mathbb{F}_q^*$ of orders $s$ and $t$, is the subgroup of $\mathbb{F}_q^*$ of order $lcm(s,t)$.
\end{itemize}
\end{lemma}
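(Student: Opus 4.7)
The plan is to dispatch each of the four parts by reducing it to a standard property of the cyclic multiplicative group $\mathbb{F}_q^*$. For (i), I would first reduce to showing that any non-trivial subgroup $H \le \mathbb{F}_q^*$ is itself zero-sum, because an arbitrary union of cosets $c_1 H \cup \dots \cup c_k H$ is disjoint and has sum $(c_1 + \dots + c_k)\sum_{h\in H} h$. To see that $\sum_{h\in H} h = 0$, note that the elements of $H$ are exactly the roots of $x^{|H|}-1$ in $\mathbb{F}_q$; since $|H| \ge 2$ the coefficient of $x^{|H|-1}$ vanishes, and Vieta's formulas give the claim.

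For (ii) the only real computation is locating $-1$ among the cyclotomic classes. Since $\mathbb{F}_q^*$ is cyclic of order $2ed$, the unique element of order $2$ is $r^{(q-1)/2} = r^{ed}$; because $d$ is odd we have $ed \equiv e \pmod{2e}$, which shows $-1 \in C^{2e}_e$. Multiplication by $-1$ therefore sends $C^{2e}_i$ to $C^{2e}_{i+e}$, so $L = \bigcup_{i=0}^{e-1} C^{2e}_i$ and $-L = \bigcup_{i=e}^{2e-1} C^{2e}_i$ are disjoint with union $\mathbb{F}_q^*$, whence $L$ is a half-set. Part (ii)' is the specialization $e=1$.

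For (iii) I would use that in the cyclic group $\mathbb{F}_q^* = \langle r \rangle$ the subgroup of any order $k \mid q-1$ is $\langle r^{(q-1)/k}\rangle$. Writing $H_s = \langle r^{(q-1)/s}\rangle$ and $H_t = \langle r^{(q-1)/t}\rangle$, the product $H_s H_t$ (a subgroup, because $\mathbb{F}_q^*$ is abelian) is generated by $r^{(q-1)/s}$ and $r^{(q-1)/t}$ together, i.e.\ by $r^{\gcd((q-1)/s,\,(q-1)/t)} = r^{(q-1)/\mathrm{lcm}(s,t)}$, which is the subgroup of order $\mathrm{lcm}(s,t)$. None of these parts presents a real obstacle; the most delicate point is the parity bookkeeping in (ii), where one must use the oddness of $d$ in exactly the right way to force $-1$ into $C^{2e}_e$ (and in particular out of $C^{2e}$ itself, whose inclusion would spoil the half-set property).
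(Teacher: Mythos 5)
Your proof is correct; the paper itself states this lemma without proof, as a collection of ``elementary facts'' about finite fields, and your arguments (Vieta's formulas for the vanishing coset sums, locating $-1=r^{ed}$ in $C^{2e}_e$ via $ed\equiv e \pmod{2e}$ for odd $d$, and the standard identity $\gcd\bigl((q-1)/s,(q-1)/t\bigr)=(q-1)/\mathrm{lcm}(s,t)$ in the cyclic group $\mathbb{F}_q^*$) are exactly the standard verifications one would supply. No gaps.
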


We also need the following.

\begin{proposition}\label{stab}
Let $q=de+1$ be a prime power and let $X$ be a subset of $\mathbb{F}_q^*$.
Then the stabilizer of $X$ under the natural action of $\mathbb{F}_q^*$ is divisible by $d$
if and only if $X$ is a union of cosets of $C^e$.
\end{proposition}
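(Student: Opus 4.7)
The plan is to exploit the fact that $\mathbb{F}_q^*$ is cyclic of order $de$, so for every divisor of $de$ there is a unique subgroup of that order. In particular $C^e$ is the unique subgroup of order $d$, so the condition ``$d$ divides the order of $\mathrm{stab}(X)$'' will be equivalent to ``$C^e \subseteq \mathrm{stab}(X)$'', which is exactly what we need.

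For the ``if'' direction I would argue as follows. Suppose $X$ is a union of cosets of $C^e$, say $X = \bigcup_{i \in I} C^e_i$. For any $u \in C^e$ and any $i$ we have $u \cdot C^e_i = u r^i C^e = r^i (u C^e) = r^i C^e = C^e_i$, so $uX = X$. Hence $C^e$ is contained in the stabilizer of $X$, and the order of the stabilizer is therefore divisible by $|C^e| = d$.

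For the converse, let $S$ denote the stabilizer of $X$ under the action of $\mathbb{F}_q^*$ by multiplication. Then $S$ is a subgroup of the cyclic group $\mathbb{F}_q^*$, and by hypothesis $d \mid |S|$. Since a finite cyclic group has a unique subgroup of each divisor of its order, $C^e$ is the unique subgroup of order $d$ in $\mathbb{F}_q^*$, and any subgroup whose order is a multiple of $d$ must contain it. Thus $C^e \subseteq S$, meaning $C^e \cdot X = X$. Consequently $X$ is a union of orbits of the $C^e$-action on $\mathbb{F}_q^*$, and these orbits are precisely the cosets of $C^e$. Therefore $X$ is a union of cosets of $C^e$, as required.

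There is really no hard step here; the only observation one needs to single out is the uniqueness of subgroups of a given order in a cyclic group, which immediately converts the divisibility condition on $|\mathrm{stab}(X)|$ into the containment $C^e \subseteq \mathrm{stab}(X)$.
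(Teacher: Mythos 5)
Your proof is correct and follows essentially the same route as the paper: the divisibility hypothesis forces $C^e\subseteq\mathrm{stab}(X)$ (via the uniqueness of subgroups of each order in the cyclic group $\mathbb{F}_q^*$), and then $X$ decomposes into $C^e$-orbits, which are exactly cosets of $C^e$. The only difference is that you spell out the ``if'' direction and the cyclic-group uniqueness argument, which the paper leaves implicit.
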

\begin{proof}
The ``if part" is obvious. Let us prove the ``only if" part.
Let $S$ be the $\mathbb{F}_q^*$-stabilizer of $X$ and assume that its order is divisible by $d$. Then $S$ contains 
the subgroup of $\mathbb{F}_q^*$ of order $d$, that is $C^e$. Thus it makes sense to consider the action
of $C^e$ on $X$. The orbits of this action partition $X$ and each of them is of the form $xC^e$ for
some $x\in X$, that is a coset of $C^e$ in $\mathbb{F}_q^*$.
The assertion follows.
\end{proof}

\section{Characterization}

One says that a subset $S$ of a multiplicative (resp.  additive) group $G$, can be {\it factorized} into the product $X\cdot Y$ (resp. sum $X+Y$)
of two subsets $X$, $Y$ of $S$ if every element of $S$ can be written in exactly one way as $x\cdot y$ (resp. $x+y$) with
$x\in X$ and $y\in Y$. One also says that $X\cdot Y$ (resp. $X+Y$) is a {\it factorization} of $S$.
By saying that $X$ is a factor of $S$ one means that $X$ is a subset of $S$ for which there exists another subset $Y$ of $S$
such that $S=X\cdot Y$ (resp. $S=X+Y$). 
In other words, $X$ is a factor of $S$ if it is possible to ``tile" $S$ with suitable translates of $X$.

We are going to show that the rank-one Heffter arrays can be characterized in terms of factorizations of a half-set of 
a finite field.

\begin{proposition}\label{prop}
Let $q=2mn+1$ be a prime power.
There exists a rank-one H$(m,n)$ if and only if there is
a suitable half-set of $\mathbb{F}_q$ admitting a factorization into the product of two
zero-sum subsets of $\mathbb{F}_q^*$ of sizes $m$ and $n$.
\end{proposition}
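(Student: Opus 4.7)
The plan is to identify rank-one $m \times n$ matrices over $\mathbb{F}_q$ with outer products of two nonzero vectors, and then read off the Heffter conditions as zero-sum and factorization properties of the underlying sets.

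First I would set up the easy structural observation. Any rank-one matrix $A = (a_{i,j})$ can be written as $a_{i,j} = x_i y_j$ for some column vector $(x_1,\dots,x_m)^T$ and some row vector $(y_1,\dots,y_n)$. Since the entries of a Heffter array form a half-set, they all lie in $\mathbb{F}_q^*$, so no row or column of $A$ is identically zero, which forces every $x_i$ and every $y_j$ to be nonzero. From here I would translate the two remaining defining properties of a Heffter array one at a time.

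For the forward direction, I would argue: the $i$-th row sum is $x_i\bigl(\sum_{j=1}^n y_j\bigr)$, and since $x_i\neq 0$ this vanishes if and only if $\sum_j y_j = 0$; symmetrically, the column condition gives $\sum_i x_i = 0$. Next, since the entries $\{x_iy_j\}$ form a half-set they are pairwise distinct, which forces the $x_i$ to be pairwise distinct (otherwise two entries in the same column would coincide), and likewise the $y_j$; therefore $X=\{x_1,\dots,x_m\}$ and $Y=\{y_1,\dots,y_n\}$ are honest sets of the required sizes, and distinctness of all products $x_iy_j$ is exactly the statement that the half-set $L$ factorizes as $X \cdot Y$. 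Together with the two sum-conditions, this yields the desired factorization of a half-set into zero-sum subsets of $\mathbb{F}_q^*$.

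For the converse, I would start from a half-set $L$ with $L = X\cdot Y$, where $X=\{x_1,\dots,x_m\}$ and $Y=\{y_1,\dots,y_n\}$ are zero-sum subsets of $\mathbb{F}_q^*$, and simply define $A$ to be the $m\times n$ matrix with entries $a_{i,j} = x_iy_j$. By construction $A$ is an outer product, hence of rank $1$; the hypothesis that $X\cdot Y$ is a factorization means the entries $x_iy_j$ are pairwise distinct and their set is exactly $L$, which is a half-set; and the row and column sums $x_i\sum_j y_j$ and $y_j\sum_i x_i$ vanish because $X$ and $Y$ are zero-sum. So $A$ is a rank-one H$(m,n)$.

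There is no genuine obstacle here: the only point requiring care is pinning down that $X$ and $Y$ really are sets of sizes $m$ and $n$ (not multisets), and this is precisely what distinctness of the entries of a half-set provides. The heart of the argument is the one-line identification of rank-one matrices with outer products.
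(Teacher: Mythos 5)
Your proof is correct and follows essentially the same route as the paper's: write the rank-one array as an outer product $a_{i,j}=x_iy_j$ and translate the Heffter conditions into the zero-sum and factorization properties of $X=\{x_1,\dots,x_m\}$ and $Y=\{y_1,\dots,y_n\}$. If anything, you are a bit more careful than the paper in the forward direction, where you explicitly deduce $\sum_j y_j=\sum_i x_i=0$ from the vanishing row and column sums (using $x_i,y_j\neq 0$) and check that $X$ and $Y$ are genuine sets of sizes $m$ and $n$ rather than multisets.
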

\begin{proof} $(\Longrightarrow).$ \quad If $A$ is a rank-one H$(m,n)$ over $\mathbb{F}_q$, then 
all columns of $A$ are multiples of a vector $(x_1,\dots,x_m)\in\mathbb{F}_q^m$. Thus, if 
$A^j$ is the $j$-th column of $A$, there is a suitable $y_j\in\mathbb{F}_q$ such that $A^j=(x_1,\dots,x_m)\cdot y_j$ for $1\leq j\leq n$.
It follows that $a_{i,j}=x_iy_j$. Hence the set of all entries of $A$, which is a  half-set of $(\mathbb{F}_q,+)$
by definition of a Heffter array, coincides with the product of the two sets $X=\{x_1,\dots,x_m\}$ and $Y=\{y_1,\dots,y_n\}$.

$(\Longleftarrow).$ \quad Let $V$ be a half-set of $\mathbb{F}_q$ and assume that 
$V=X\cdot Y$ where $X=\{x_1,\dots,x_m\}$ and $Y=\{y_1,\dots,y_n\}$ are zero-sum subsets 
of $\mathbb{F}_q^*$ of sizes $m$ and $n$, respectively. Consider the $m\times n$ array $A$ over $\mathbb{F}_q$
defined by $a_{i,j}=x_iy_j$ for $1\leq i\leq m$ and for $1\leq j\leq n$. It is obvious that the entries
of $A$ are precisely the elements of $V$, hence they form a half-set of $(\mathbb{F}_q,+)$.
Note that the $i$-th row of $A$ is $x_i\cdot (y_1,\dots,y_n)$ and that the
$j$-th column of $A$ is $(x_1,\dots,x_m)\cdot y_j$. Thus, given that both $X$ and $Y$ are zero-sum,
we infer that all rows and columns of $A$ are also zero-sum.
We conclude that $A$ is a rank-one H$(m,n)$.
\end{proof}

We will briefly refer to the sets $X$ and $Y$ in the above proof as the
{\it factors of the Heffter array $A$}.

The next result is just a special case of the more general Theorem \ref{agreeable} that we will see later.
On the other hand we think it is appropriate to give its statement now and separately, since it is an almost immediate 
consequence of Proposition \ref{prop}. Actually, this is the easiest and nicest construction of
a rank-one Heffter array.

\begin{corollary}\label{cor}\label{perfect}
If $(m,n)$ is an admissible pair with $m$, $n$ odd and coprime,
then $C^{2n}$ and $C^{2m}$ are the factors of a rank-one H$(m,n)$. 
\end{corollary}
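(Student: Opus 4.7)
The plan is to apply Proposition \ref{prop} directly with $X = C^{2n}$ and $Y = C^{2m}$, so the work reduces to verifying four items: the two sets have the prescribed sizes $m$ and $n$; each is zero-sum; their pointwise product is a half-set of $\mathbb{F}_q$; and that product realises a genuine factorization (unique representation).

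First I would record the sizes. Since $q-1 = 2mn$, the subgroup $C^{2n}$ of index $2n$ has order $m$ and $C^{2m}$ has order $n$. Both subgroups are non-trivial because admissibility forces $m,n\geq 3$. Consequently each of $C^{2n}$ and $C^{2m}$ is, trivially, a union of cosets of itself, so Lemma \ref{lemma}(i) applies and both sets are zero-sum.

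Next I would handle the half-set condition. By Lemma \ref{lemma}(iii) the set-theoretic product $C^{2n}\cdot C^{2m}$ is the subgroup of $\mathbb{F}_q^*$ of order $\mathrm{lcm}(m,n)$, and since $\gcd(m,n)=1$ this equals $mn$; thus $C^{2n}\cdot C^{2m}=C^2$. Because $mn$ is odd (both $m$ and $n$ are odd), Lemma \ref{lemma}(ii$'$) tells us that $C^2$ is a half-set of $\mathbb{F}_q$. This is the substantive step, but it is an immediate cyclotomic computation.

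Finally I would verify that the product $C^{2n}\cdot C^{2m}=C^2$ is a factorization in the precise sense of Section 3, i.e., that each element of $C^2$ has a unique expression $x\cdot y$ with $x\in C^{2n}$ and $y\in C^{2m}$. This follows by counting: one has $|C^{2n}|\cdot|C^{2m}|=mn=|C^2|$, so the surjective multiplication map from $C^{2n}\times C^{2m}$ onto $C^2$ must be a bijection. (Equivalently, $C^{2n}\cap C^{2m}=C^{\mathrm{lcm}(2n,2m)}=C^{2mn}=\{1\}$.) With all four conditions in hand, Proposition \ref{prop} produces the desired rank-one H$(m,n)$ with factors $C^{2n}$ and $C^{2m}$. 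I do not expect a genuine obstacle; the only point requiring a moment's care is the coprimality hypothesis, which is used both to reach order $mn$ in Lemma \ref{lemma}(iii) and to make the factorization unique.
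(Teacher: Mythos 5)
Your proposal is correct and follows essentially the same route as the paper: identify $C^{2n}$ and $C^{2m}$ as the subgroups of orders $m$ and $n$, invoke Lemma \ref{lemma}(i) for the zero-sum property, Lemma \ref{lemma}(iii) for the product being $C^2$, Lemma \ref{lemma}(ii$'$) for the half-set property, and conclude via Proposition \ref{prop}. Your explicit check that the product is a genuine factorization (via the cardinality count, or equivalently $C^{2n}\cap C^{2m}=\{1\}$) is a detail the paper leaves implicit, and it is a welcome addition.
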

\begin{proof}
By assumption, $q=2mn+1$ is a prime power. The set $X=C^{2n}$ and $Y=C^{2m}$ are the subgroups of $\mathbb{F}_q^*$ 
of orders $m$ and $n$, respectively. They are zero-sum by
Lemma \ref{lemma}$(i)$. Also, their product is the subgroup of $\mathbb{F}_q^*$ of order $mn$ by Lemma \ref{lemma}$(iii)$, i.e., the 
group $C^2$ of non-zero squares of $\mathbb{F}_q$ which is a half-set by Lemma \ref{lemma}$(ii)'$. The assertion
then follows from Proposition \ref{prop}.
\end{proof}

The above gives, in particular, infinitely many Heffter arrays which are elementary abelian
but not prime.
For instance, we have $7^3=2\cdot9\cdot19+1$ and hence, applying Corollary \ref{cor} 
with $m=9$ and $n=19$, we get a H$(9,19)$ over $\mathbb{F}_{7^3}$.

We will see that the group of multipliers of an H$(m,n)$ obtainable via  Corollary \ref{cor} 
is $C^2$, i.e., the group of non-zero squares of $\mathbb{F}_{2mn+1}$.

\begin{example}
Let us construct a rank-one H$(3,5)$. The request makes sense since $q=2\cdot3\cdot5+1=31$ is a prime.
Note that $x=5$ is a cubic root of unity (mod 31) and that $y=2$ is a 5th root of unity (mod 31).  
Thus $X=\langle x \rangle =\{1,5,25\}$ is the subgroup of $\mathbb{F}_{31}^*$ of order 3 
and $Y=\langle y \rangle =\{1,2,4,8,16\}$  is the subgroup of $\mathbb{F}_{31}^*$ of order 5.
The desired rank-one H$(3,5)$ is therefore
$$A=\begin{pmatrix}1 & 2 & 4 & 8 & 16\cr 5 & 10 & 20 & 9 & 18 \cr 25 & 19 & 7 & 14 & 28\end{pmatrix}$$
Note that multiplying $A$ by 9 (which is a generator of the squares of $\mathbb{F}_{31}$) corresponds to 
permute its rows cyclically $A_i \rightarrow A_{i+1 \ (mod \ 3)}$ and
then to permute the columns cyclically $A^j \rightarrow A^{j+3 \ (mod \ 5)}$. This implies that the
non-zero squares of $\mathbb{F}_{31}$ form a group of multipliers of the above array. 
\end{example}

\section{Multipliers of a rank-one Heffter array}

From now on, given any integer $k$ we denote by $k_o$ the {\it odd part of $k$}, that is 
the greatest odd divisor of $k$.

\begin{proposition}\label{multipliers}
Let $A$ be a rank-one H$(m,n)$ over $\mathbb{F}_q$ with factors $X$ and $Y$.
Then the group of multipliers of $A$ is the product of the $\mathbb{F}_q^*$-stabilizers
of $X$ and $Y$. Also, its order is at most equal to $lcm(m_o,n_o)$.
\end{proposition}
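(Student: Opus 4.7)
The plan is to prove the two assertions in order: first characterize the group of multipliers of $A$ as the product $\mathrm{Stab}(X)\cdot\mathrm{Stab}(Y)$ of the two $\mathbb{F}_q^*$-stabilizers, and then bound the size of that product via a half-set argument. For the characterization, I would start from the rank-one representation $a_{i,j}=x_iy_j$ given by Proposition \ref{prop}. If $u$ is a multiplier coming from the $uA$ clause, there exist permutations $\sigma,\tau$ with $ux_iy_j=x_{\sigma(i)}y_{\tau(j)}$ for all $i,j$. Fixing a column index $j_0$ and setting $c=y_{\tau(j_0)}/y_{j_0}$ gives $(u/c)x_i=x_{\sigma(i)}$, so $u/c\in\mathrm{Stab}(X)$; substituting back into the original relation then yields $y_{\tau(j)}=cy_j$ for every $j$, i.e., $c\in\mathrm{Stab}(Y)$. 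Hence $u\in\mathrm{Stab}(X)\cdot\mathrm{Stab}(Y)$. The reverse inclusion is immediate: for $s\in\mathrm{Stab}(X)$ and $t\in\mathrm{Stab}(Y)$, the permutations of $X$ and $Y$ induced by multiplication by $s$ and $t$ realize $(st)A$ as a row-and-column permutation of $A$.

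For the bound, I would first observe that the action of $\mathbb{F}_q^*$ on itself is free, so the $\mathrm{Stab}(X)$-orbits partition $X$ into parts of size $|\mathrm{Stab}(X)|$; hence $|\mathrm{Stab}(X)|$ divides $m$, and similarly $|\mathrm{Stab}(Y)|$ divides $n$. The key point is that $V:=X\cdot Y$ is a half-set, so $V\cap(-V)=\emptyset$; this forbids $-1$, the unique element of order two in $\mathbb{F}_q^*$, from lying in $\mathrm{Stab}(V)$. Since $\mathbb{F}_q^*$ is cyclic, every subgroup of even order contains $-1$, so $|\mathrm{Stab}(V)|$ is odd. Because $\mathrm{Stab}(X)$ and $\mathrm{Stab}(Y)$ are both contained in $\mathrm{Stab}(V)$, their orders are odd and therefore divide $m_o$ and $n_o$, respectively. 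Finally, in the cyclic group $\mathbb{F}_q^*$ the product of the subgroups of orders $|\mathrm{Stab}(X)|$ and $|\mathrm{Stab}(Y)|$ is the unique subgroup of order $lcm(|\mathrm{Stab}(X)|,|\mathrm{Stab}(Y)|)$, which divides $lcm(m_o,n_o)$.

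The main obstacle is the ``or $uA^T$'' alternative in the definition of a multiplier: one has to verify that in the square case $m=n$ it cannot produce multipliers outside $\mathrm{Stab}(X)\cdot\mathrm{Stab}(Y)$. I would handle this by comparing the rank-one factorizations of $uA^T$ and of any row-and-column rearrangement of $A$; matching them forces $Y$ to be a scalar multiple of $X$, whence $\mathrm{Stab}(X)=\mathrm{Stab}(Y)$ and any such $u$ already lies in the product group, so no new multipliers arise from this clause.
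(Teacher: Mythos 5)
Your proof is correct and follows essentially the same route as the paper: extract the scalar relation $u x_i y_j = x_{\sigma(i)} y_{\tau(j)}$ to split $u$ into a factor stabilizing $X$ times a factor stabilizing $Y$, then bound the stabilizers by observing that their orders divide $m$ and $n$ and must be odd because $-1$ cannot fix a half-set, so the product subgroup has order dividing $lcm(m_o,n_o)$. The one step to tighten is the transpose case: knowing $\mathrm{Stab}(X)=\mathrm{Stab}(Y)$ does not by itself place $u$ in their product, but pushing your ``matching'' one line further (from $X=\alpha Y$ and $Y=(u/\alpha)X$ one gets $uY=Y$ and $uX=X$) does, so the argument closes as intended.
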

\begin{proof}
Set $X=\{x_1,\dots,x_m\}$ and $Y=\{y_1,\dots,y_n\}$ so that we have $a_{i,j}=x_iy_j$ for 
$1\leq i\leq m, 1\leq j\leq n$.
We have to prove that the group $M$ of multipliers of $A$ is the product $S\cdot T$
where $S$ and $T$ are the $\mathbb{F}_q^*$-stabilizers of $X$ and $Y$, respectively.
In the following, $A_i$ and $\overline{A_i}$ will denote the $i$-th row of $A$ and
the set of its elements, respectively. Analogously, $A^j$ and $\overline{A^j}$ will denote the $j$-th column of $A$ and
the set of its elements, respectively.

If $s\in S$, 
 then $s(x_1,\dots,x_m)=(x_{\pi(1)},\dots, x_{\pi(m)})$ for a suitable permutation
$\pi$ on the set $\{1,\dots,m\}$. It easily follows that $sA$ is the matrix whose $j$-th
column is $A^{\pi(j)}$ for $1\leq j\leq m$, hence a matrix obtainable from $A$ by permuting its columns.
Thus, by definition, $s$ is a multiplier of $A$.

Analogously, if $t\in T$, then $t(y_1,\dots,y_n)=(y_{\psi(1)},\dots, y_{\psi(n)})$ for a suitable permutation
$\psi$ on the set $\{1,\dots,n\}$. It follows that $tA$ is the matrix whose $i$-th
row is $A_{\psi(i)}$ for $1\leq i\leq n$, hence a matrix obtainable from $A$ by permuting its rows
and then $t$ is a multiplier of $A$.

Thus $M$ contains both $S$ and $T$, hence it contains their product $S\cdot T$. 

Now we prove the inverse inclusion. Let $u\in M$. By definition of a multiplier, 
there is a suitable pair $(i,j)$ such that either

\noindent
(1) $u A_1$ is a permutation of $A_i$ and $u A^1$ is a permutation of $A^j$;

or

\noindent
(2) $u A_1$ is a permutation of $A^j$ and $u A^1$ is a permutation of $A_i$.

Of course (2) may happen only in the case $m=n$.

Assume that (1) holds.
It follows, in particular, that $u a_{1,1}=u x_1y_1$ must be 
the common element of $A_i$ and $A^j$, that is $a_{i,j}=x_iy_j$.
Thus we have \begin{equation}\label{m} u=x_iy_jx_1^{-1}y_1^{-1}\end{equation}

For $1\leq i\leq m$, we have $A_i=(x_iy_1,x_iy_2,\dots,x_iy_n)$, hence $\overline{A_i}=x_iY$.
Analogously, for $1\leq j\leq n$, we have $A^j=(x_1y_j,x_2y_j,\dots,x_my_j)$, hence $\overline{A^j}=Xy_j$. 
In particular, we have $\overline{A_1}=x_1Y$ and $\overline{A^1}=Xy_1$.

The fact that $u A_1$ is a permutation of $A_i$ implies that $u \overline{A_1}=\overline{A_i}$, i.e., $u x_1Y=x_iY$.
Analogously, the fact that $u A^1$ is a permutation of $A^j$ implies that
$u\overline{A^1}=\overline{A^j}$, i.e., $u Xy_1=Xy_j$. Thus, using (\ref{m}), we get $y_jy_1^{-1}Y=Y$
and $x_ix_1^{-1}X=X$. This means that $y_jy_1^{-1}$ stabilizes $Y$ and $x_ix_1^{-1}$ stabilizes $X$,
i.e., $y_jy_1^{-1}\in T$ and $x_ix_1^{-1}\in S$. 
The conclusion is that $u=(x_ix_1^{-1})(y_jy_1^{-1})\in S\cdot T$.

If $m=n$ and (2) holds, reasoning in a very similar way one finds that $u\in S \cap T$, 
hence $u$ is the unity. Indeed, if $u$ fixes both $X$ and $Y$ then we have $u x_1=x_i$ and $u y_1=y_j$ for a suitable
pair $(i,j)$. It follows that $u x_1y_1=x_iy_1=x_1y_j$ which implies $i=j=1$, hence $u=1$.

By Proposition \ref{stab}, the order of $S$ is a divisor of $|X|=m$. Remember, however, that $-1$ cannot be a multiplier 
so that $-1\notin S$ which means that $S$ has odd order.
Hence $|S|$ is a divisor of $m_o$ and then $S$ is a subgroup of the group $M_o$ of the $m_o$-th roots of unity.
Reasoning exactly in the same way, one can see that $T$ is a subgroup of the group $N_o$ 
of the $n_o$-th roots of unity. Thus $M=S\cdot T$ is a subgroup of $M_o\cdot N_o$.
We get the assertion observing that the order of $M_o\cdot N_o$ is $lcm(m_o,n_o)$
(see Lemma \ref{lemma}$(iii)$).
\end{proof}

The above suggests the following definition.

\begin{definition}
A rank-one H$(m,n)$ is {\it optimal} if the order of its group of multipliers is the least common multiple of the 
odd parts of $m$ and $n$. It is {\it perfect} if it has order $mn$, i.e., it is the group $C^2$ of non-zero squares of $\mathbb{F}_{2mn+1}$.
\end{definition}

The perfect rank-one H$(m,n)$ can be completely characterized in view of Corollary \ref{cor}.

\begin{proposition}\label{perfect2}
There exists a perfect rank-one H$(m,n)$ if and only if $(m,n)$ is an admissible
pair with $m$, $n$ odd and coprime. 
\end{proposition}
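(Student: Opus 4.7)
The plan is to deduce this proposition as an almost immediate consequence of Proposition \ref{multipliers} and Corollary \ref{cor}, since essentially all the ingredients are already in place.

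For the ``if'' direction, I would assume $(m,n)$ is admissible with $m$, $n$ odd and coprime, and apply Corollary \ref{cor} to obtain a rank-one H$(m,n)$ whose factors are $X = C^{2n}$ and $Y = C^{2m}$. Since each of these factors is itself a multiplicative subgroup of $\mathbb{F}_q^*$ (with $q = 2mn+1$), its $\mathbb{F}_q^*$-stabilizer under the natural multiplicative action coincides with itself. By Proposition \ref{multipliers}, the group of multipliers therefore equals $X \cdot Y = C^{2n}\cdot C^{2m}$, which by Lemma \ref{lemma}(iii) is the subgroup of $\mathbb{F}_q^*$ of order $\mathrm{lcm}(m,n) = mn$, i.e.\ the group $C^2$ of non-zero squares. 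Hence the array is perfect.

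For the ``only if'' direction, I would suppose a perfect rank-one H$(m,n)$ exists, so that by definition its multiplier group has order exactly $mn$, while Proposition \ref{multipliers} bounds this order by $\mathrm{lcm}(m_o,n_o)$. Chaining the obvious estimates,
$$mn \;\leq\; \mathrm{lcm}(m_o,n_o) \;\leq\; m_o\, n_o \;\leq\; m\,n,$$
every inequality must be an equality. The rightmost equality $m_o\, n_o = m\,n$ forces $m_o = m$ and $n_o = n$, so $m$ and $n$ are both odd; the middle equality $\mathrm{lcm}(m_o,n_o) = m_o\, n_o$ then forces $\gcd(m,n)=1$. Admissibility of $(m,n)$ is built into the standing convention for ``a rank-one H$(m,n)$'', so no separate argument is needed there.

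The only place any real care is required is in correctly reading off the $\mathbb{F}_q^*$-stabilizers of the factors in the ``if'' direction; this is painless precisely because $C^{2n}$ and $C^{2m}$ are themselves subgroups and hence self-stabilizing. The ``only if'' direction is purely a matter of pinning down two easy inequalities to equality, and presents no technical obstacle.
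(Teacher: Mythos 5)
Your proposal is correct and follows essentially the same route as the paper: the ``if'' direction invokes Corollary \ref{cor} together with the self-stabilizing property of the subgroup factors and Lemma \ref{lemma}(iii), and the ``only if'' direction pins $\mathrm{lcm}(m_o,n_o)$ against $mn$ exactly as the paper does (your explicit chain of inequalities just spells out the paper's remark that $\mathrm{lcm}(m_o,n_o)$ divides $mn$). No gaps.
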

\begin{proof}
Let $A$ be a perfect rank-one H$(m,n)$ and let $M$ be its group of multipliers. 
By definition, we have $|M|=mn$ but we also have $|M|\leq lcm(m_o,n_o)$
by Proposition \ref{multipliers}.
Thus, considering that $lcm(m_o,n_o)$ is a divisor of $mn$, we necessarily have $lcm(m_o,n_o)=mn$.
Clearly, this is possible only if $m$ and $n$ are odd and coprime. 

Now let $(m,n)$ be an admissible pair with $m$, $n$ odd and coprime. 
By Corollary \ref{cor} there exists a rank-one H$(m,n)$ whose factors
are the group $X$ of $m$-th roots of unity and the group $Y$ of $n$-th roots of unity.
Their $\mathbb{F}_q^*$-stabilizers are  $X$ and $Y$, respectively (see Proposition \ref{stab}). So the group of multipliers is
$X\cdot Y$ whose order is $mn$ by Lemma \ref{lemma}$(iii)$.
\end{proof}

\section{Globally simple rank-one Heffter arrays}

The useful notion of a {\it globally simple} Heffter array over a cyclic group has been introduced in \cite{CMPP} 
and can be extended to Heffter arrays over any group in the obvious way. In particular, a tight
H$(m,n)$, say $A=(a_{i,j})$, is globally simple  if each of its rows and each of its columns has pairwise distinct partial sums,
i.e., none of the lists $\displaystyle\sigma_i=(\sum_{k=1}^ja_{i,k} \ | \ 1\leq j\leq n)$, $i=1,\dots,m$,
and $\displaystyle\tau_j=(\sum_{k=1}^ia_{k,j} \ | \ 1\leq i\leq m)$, $j=1,\dots,n$, has repeated elements.

In general, to check whether an H$(m,n)$ is globally simple could require long and tedious calculations. 
This task is much easier in the case of rank-one Heffter arrays. 
\begin{proposition}\label{gs}
Let $A=(a_{i,j})$ be a rank-one H$(m,n)$ such that the first of its rows and the first of its columns 
have pairwise distinct partial sums. Then $A$ is globally simple.
\end{proposition}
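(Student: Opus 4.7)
The plan is to exploit the rank-one factorization $a_{i,j}=x_iy_j$ (where $X=\{x_1,\dots,x_m\}$ and $Y=\{y_1,\dots,y_n\}$ are the factors of $A$, as in Proposition \ref{prop}) to express every row partial sum as a scalar multiple of a partial sum in the first row, and likewise for columns.

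Concretely, I would first write
$$\sigma_i(j):=\sum_{k=1}^{j}a_{i,k}=x_i\sum_{k=1}^{j}y_k\qquad(1\le i\le m,\ 1\le j\le n),$$
and observe that in particular $\sum_{k=1}^{j}y_k=x_1^{-1}\sigma_1(j)$ (here $x_1\ne 0$ since $X\subset\mathbb{F}_q^*$). Substituting, we obtain
$$\sigma_i(j)=\frac{x_i}{x_1}\,\sigma_1(j).$$
Thus the list $\sigma_i$ is obtained from the list $\sigma_1$ by multiplication by the fixed nonzero scalar $x_i/x_1\in\mathbb{F}_q^*$, which is a bijection of $\mathbb{F}_q$; hence $\sigma_i$ has pairwise distinct entries whenever $\sigma_1$ does.

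The same argument applied to columns gives $\tau_j(i)=(y_j/y_1)\,\tau_1(i)$, so the distinctness of the partial sums in $\tau_1$ propagates to every column. Combining these two observations yields that $A$ is globally simple. There is no real obstacle: the whole proof is essentially the one-line remark that scaling a list by a nonzero field element preserves the property of having pairwise distinct entries, and the rank-one structure makes every row (resp.\ column) of partial sums a scalar multiple of the first row (resp.\ column) of partial sums.
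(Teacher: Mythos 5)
Your proof is correct and follows essentially the same route as the paper's: the paper writes $A_i=\rho_i A_1$ directly from the rank-one hypothesis and concludes $\sigma_i=\rho_i\sigma_1$, which is exactly your observation with $\rho_i=x_i/x_1$. The only cosmetic difference is that you pass through the explicit factorization $a_{i,j}=x_iy_j$ rather than invoking proportionality of rows and columns directly.
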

\begin{proof}
Let $\sigma_i=(\sigma_{i,1}, \dots, \sigma_{i,n})$ be the 
list of the partial sums of $A_i$. We have $A_i=\rho_iA_1$ for a suitable $\rho_i\in\mathbb{F}_q^*$ since
$A$ is rank-one. Hence we have $\sigma_i=\rho_i\sigma_1$ which implies that $\sigma_i$ does not have 
repeated elements since this is true for $\sigma_1$ by assumption. Thus the partial sums of any row of $A$ 
are pairwise distinct.
Reasoning in the same way, we can see that the partial sums of any column of $A$ are also pairwise distinct.
\end{proof}

The perfect Heffter arrays of Corollary \ref{perfect} are globally simple.

\begin{theorem}
If $q=2mn+1$ is a prime power with $m$, $n$ odd and coprime, 
then there exists a globally simple H$(m,n)$ over $\mathbb{F}_q$.
\end{theorem}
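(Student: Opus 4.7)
The plan is to start from the perfect rank-one H$(m,n)$ produced in Corollary \ref{cor}, whose factors are the cyclic subgroups $X=C^{2n}$ and $Y=C^{2m}$ of $\mathbb{F}_q^*$, of orders $m$ and $n$ respectively, and then invoke Proposition \ref{gs}. Since that proposition reduces global simplicity to distinctness of the partial sums of just the first row and the first column, I only need to choose a convenient ordering of the two factors. Because permuting rows or columns of a Heffter array produces another Heffter array with the same factors (re-indexed), this is a cost-free move.

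Concretely, I would pick a generator $x$ of the order-$m$ group $X$ and list its elements as $x_i=x^{i-1}$ for $i=1,\dots,m$; and similarly pick a generator $y$ of $Y$ and set $y_j=y^{j-1}$ for $j=1,\dots,n$. With this ordering, the first row of the array $A=(x_iy_j)$ is $(y^0,y^1,\dots,y^{n-1})$, so its $k$-th partial sum is the geometric sum
\[
S_k \;=\; \sum_{j=0}^{k-1} y^j \;=\; \frac{y^k-1}{y-1}\qquad (1\le k\le n),
\]
where the denominator is nonzero because $y$ has order $n\ge 3$. Two such sums coincide iff $y^k=y^\ell$, i.e.\ $n\mid k-\ell$; but $1\le k,\ell\le n$, so this forces $k=\ell$. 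The same geometric-sum argument, applied to $x$ in place of $y$, shows that the partial sums of the first column $(x^0,x^1,\dots,x^{m-1})^{\!T}$ are pairwise distinct.

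Proposition \ref{gs} then immediately gives that $A$ is globally simple, completing the proof. I do not foresee any real obstacle here: the only thing to watch is that the ordering chosen for $X$ and $Y$ is an honest permutation of the factors (so the array remains an H$(m,n)$) and that $y-1$ and $x-1$ are invertible in $\mathbb{F}_q$ (which holds because the generators $x,y$ are nontrivial elements of $\mathbb{F}_q^*$). Everything else reduces to the elementary observation that a geometric progression of length less than the order of its ratio has distinct partial sums.
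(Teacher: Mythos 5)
Your proposal is correct and follows essentially the same route as the paper: order the factors of the perfect rank-one array of Corollary \ref{cor} as geometric progressions in generators of the two cyclic factors, observe that the partial sums $\frac{y^k-1}{y-1}$ (resp.\ $\frac{x^i-1}{x-1}$) are pairwise distinct because coincidence would force the order of the generator to divide a difference of indices smaller than that order, and conclude by Proposition \ref{gs}. The only difference is notational (which generator is attached to rows versus columns), so there is nothing substantive to add.
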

\begin{proof}
Let $x$ be a primitive $n$-th root of unity in $\mathbb{F}_q$ and let $y$ be a primitive $m$-th root of unity in $\mathbb{F}_q$. 
By Corollary \ref{perfect}, we can take $X=(1,x,x^2,...,x^{n-1})$ and $Y=(1,y,y^2,...,y^{m-1})$ 
as first row and first column of a perfect rank-one H$(m,n)$.
The $j$-th partial sum of $X$ is  $\sigma_j=\sum_{k=0}^{j-1}x^{k}={x^{j}-1\over x-1}$ for $1\leq j\leq n$.
If we have $\sigma_{j_1}=\sigma_{j_2}$ with $j_1,j_2\in\{1,\dots,n\}$, then we have $x^{j_1-j_2}=1$, 
hence $n$ divides $j_1-j_2$ since $x$ has order $n$ in $\mathbb{F}_q^*$. This is possible only
for $j_1=j_2$, hence the partial sums of $X$ are pairwise distinct. 

Analogously, the $i$-th partial sum of $Y$ is  $\tau_i=\sum_{k=0}^{i-1}y^{k}={y^{i}-1\over y-1}$ for $1\leq i\leq m$.
If we have $\tau_{i_1}=\tau_{i_2}$ with $i_1,i_2\in\{1,\dots,m\}$, then we have $y^{i_1-i_2}=1$, 
hence $m$ divides $i_1-i_2$ since $y$ has order $m$ in $\mathbb{F}_q^*$. This is possible only
for $i_1=i_2$, hence the partial sums of $Y$ are pairwise distinct. 

The assertion then follows from Proposition \ref{gs}.
\end{proof}

\section{Main results}\label{agreeablepairs}

Let us say that an admissible pair $(m,n)$ is {\it agreeable} if there exist two distinct odd 
primes $p$ and $p'$ dividing $m$ and $n$, respectively.
We are going to give an explicit construction of a rank-one H$(m,n)$ for any admissible agreeable pair $(m,n)$.
Then we will discuss its possible optimality.

\begin{theorem}\label{agreeable}
If $q=2mn+1$ is a prime power with $(m,n)$ agreeable, then there exists a rank-one H$(m,n)$ over $\mathbb{F}_q$.
\end{theorem}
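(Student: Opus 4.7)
The plan is to apply Proposition \ref{prop}: it suffices to build a half-set of $\mathbb{F}_q$ that admits a factorization $X \cdot Y$ with $X, Y$ zero-sum subsets of $\mathbb{F}_q^*$ of sizes $m$ and $n$. The idea is to take $X$ and $Y$ to be unions of cosets of small prime-order subgroups, using the agreeable hypothesis in a crucial way. Fix distinct odd primes $p \mid m$, $p' \mid n$, write $m = p m_1$ and $n = p' n_1$, let $H, K \le \mathbb{F}_q^*$ be the unique subgroups of orders $p$ and $p'$ (they exist because $p, p' \mid q-1 = 2mn$), and fix a primitive root $r$ of $\mathbb{F}_q$. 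Since $\gcd(p, p') = 1$, Lemma \ref{lemma}(iii) gives $|HK| = pp'$, so $HK$ coincides with $C^e$ for $e = 2 m_1 n_1$.

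My candidate choice is
\[
X = \bigcup_{i=0}^{m_1-1} r^i H \qquad \text{and} \qquad Y = \bigcup_{j=0}^{n_1-1} r^{j m_1} K.
\]
Being unions of cosets of non-trivial subgroups of $\mathbb{F}_q^*$, both $X$ and $Y$ are zero-sum by Lemma \ref{lemma}(i); the cosets listed in each definition are distinct (the exponents stay inside a complete system of representatives for the relevant index in $\mathbb{F}_q^*$), so $|X|=m$ and $|Y|=n$. A direct expansion gives
\[
X \cdot Y = \bigcup_{\substack{0 \le i < m_1 \\ 0 \le j < n_1}} r^{\,i + j m_1}\, H K.
\]

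The critical step, and the one I expect to be the main technical point, is to verify that $X \cdot Y$ is a half-set of $\mathbb{F}_q$. First, the exponents $i + j m_1$ with $0 \le i < m_1$ and $0 \le j < n_1$ are pairwise distinct modulo $e = 2 m_1 n_1$: any collision would force $m_1 \mid (i - i')$, hence $i = i'$, and then $n_1 \mid (j - j')$, hence $j = j'$. Therefore $X \cdot Y$ is the disjoint union of the $m_1 n_1$ consecutive cyclotomic classes $C^e_0, C^e_1, \ldots, C^e_{m_1 n_1 - 1}$, of total size $m_1 n_1 \cdot p p' = m n$; in particular $X$ and $Y$ genuinely factor their product. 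Second, since $|HK| = pp'$ is odd one has $-1 \notin HK$, and because $(-1)^2 = 1$ the element $-1$ must lie in the unique order-$2$ coset of $\mathbb{F}_q^*/HK$, that is, $-1 \in C^e_{m_1 n_1}$. Consequently $-(X \cdot Y) = C^e_{m_1 n_1} \cup \cdots \cup C^e_{2 m_1 n_1 - 1}$, which is disjoint from $X \cdot Y$ and complementary to it in $\mathbb{F}_q^*$. Thus $X \cdot Y$ is a half-set, and Proposition \ref{prop} produces the desired rank-one H$(m,n)$.
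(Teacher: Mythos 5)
Your proposal is correct and follows essentially the same route as the paper's proof: both invoke Proposition \ref{prop} after building $X$ and $Y$ as unions of $m/p$ and $n/p'$ cosets of the subgroups of orders $p$ and $p'$, chosen so that $X\cdot Y$ is the union of the consecutive cyclotomic classes $C^{2e}_0,\dots,C^{2e}_{e-1}$ and hence a half-set via Lemma \ref{lemma}(ii). The only (harmless) difference is that you fix $p$ and $p'$ to be primes, whereas the paper allows any coprime odd divisors $m_1\mid m_o$, $n_1\mid n_o$ greater than $1$ -- flexibility it exploits only later, for the optimality result.
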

\begin{proof}
Take any pair $(m_1,n_1)$ of odd coprime integers greater than 1 with $m_1$ dividing $m_o$ and $n_1$ dividing $n_o$.
Such a pair certainly exists by definition of an agreeable pair; in the worst of the cases $m_1$ and $n_1$
may be suitable odd primes.
Set $m=m_1m_2$, $n=n_1n_2$ and consider the following subsets of $\mathbb{F}_q^*$:
$$X=\bigcup_{i=0}^{m_2-1}C^{2m_2n}_i;\quad\quad Y=\bigcup_{j=0}^{n_2-1}C^{2mn_2}_{jn_2}.$$
We have $2m_2n={q-1\over m_1}$, hence $C^{2m_2n}$ is the subgroup of $\mathbb{F}_q^*$ of order $m_1$.
So we see that $X$ is a union of $m_2$ cosets of this subgroup, hence a $m$-subset of $\mathbb{F}_q^*$
which is zero-sum by Lemma \ref{lemma}$(i)$.
Analogously, we have $2mn_2={q-1\over n_1}$, hence $C^{2mn_2}$ is the subgroup of $\mathbb{F}_q^*$ of order $n_1$.
Thus $Y$ is a union of $n_2$ cosets of this subgroup, hence a $n$-subset of $\mathbb{F}_q^*$
which is zero-sum by Lemma \ref{lemma}$(i)$.

Given that $\gcd(m_1,n_1)=1$, by Lemma \ref{lemma}$(iii)$ the product of the subgroups of $\mathbb{F}_q^*$
of orders $m_1$ and $n_1$ is the subgroup of $\mathbb{F}_q^*$ of order $m_1n_1$:
$$C^{2m_2n}\cdot C^{2mn_2}=C^{2m_2n_2}$$ 
It follows that 
$$C^{2m_2n}_i\cdot C^{2mn_2}_j=C^{2m_2n_2}_{i+j}$$ 
and then we can write:
$$X\cdot Y=\displaystyle\bigcup_{(i,j)\in I\times J}C^{2m_2n_2}_{i+j}$$
with $I=\{0,1,\dots,m_2-1\}$ and $J=\{0,m_2,2m_2,\dots,(n_2-1)m_2\}$.
Now note that $I+J$ is a factorization of the whole interval $[0,m_2n_2-1]$. Thus, setting $e=m_2n_2$, we can write
$$X\cdot Y=C^{2e}_0 \ \cup \ C^{2e}_1 \ \cup \ \dots \ \cup \ C^{2e}_{e-1}$$ 
It follows, by Lemma \ref{lemma}$(ii)$, that
$X\cdot Y$ is a half-set of $\mathbb{F}_q$.
Then, if we take an arbitrary ordering $(x_1,\dots,x_m)$ of $X$ and
an arbitrary ordering $(y_1,\dots,y_n)$ of $Y$, using the ``if part" of Proposition \ref{prop}
we can finally say that the $m\times n$ array $[x_iy_j]$ is a rank-one H$(m,n)$.
\end{proof}

\begin{example}
Let us construct a rank-one H$(6,15)$. The pair $(6,15)$ is agreeable. Indeed $q=2\cdot6\cdot15+1=181$ is a prime and
$m_1=3$, $n_1=5$ are distinct primes dividing the odd parts of 6 and 15, respectively. Using $r=2$ as primitive element of $\mathbb{F}_{181}$ and 
following the instructions of Theorem \ref{agreeable} we find that
the factors of a rank-one H$(6,15)$ are 
\small
$$X=C^{60}_0 \ \cup \ C^{60}_1=\{1, 48, 132, 2, 96, 83\} \quad {\rm and}$$
$$Y=C^{36}_0 \ \cup \ C^{36}_2 \ \cup \ C^{36}_4=\{1, 59, 42, 125, 135, 4, 55, 168, 138, 178, 16, 39, 129, 9, 169\}.$$
\normalsize
Therefore the desired H$(6,15)$ is the following
\scriptsize
$$\left(\begin{array}{*{24}c}
1& 59& 42& 125& 135& 4& 55& 168& 138& 178& 16& 39& 129& 9& 169 \cr
48& 117& 25& 27& 145& 11& 106& 100& 108& 37& 44& 62& 38& 70& 148 \cr
132& 5& 114& 29& 82& 166& 20& 94& 116& 147& 121& 80& 14& 102& 45 \cr
2& 118& 84& 69& 89& 8& 110& 155& 95& 175& 32& 78& 77& 18& 157 \cr
96& 53& 50& 54& 109& 22& 31& 19& 35& 74& 88& 124& 76& 140& 115 \cr
83& 10& 47& 58& 164& 151& 40& 7& 51& 113& 61& 160& 28& 23& 90
\end{array}\right)$$
\end{example}

We recall  that the 
{\it radical} of an integer $k$, denoted by $rad(k)$, is the product of the individual prime factors of $k$ if $k>1$, and it is equal to 1
if $k=1$. We give the following definition.
\begin{definition}
An admissible pair $(m,n)$ is {\it optimal} if it is agreeable and neither $m_o\cdot rad(m_o)$ divides $n_o$ nor $n_o\cdot rad(n_o)$ divides $m_o$.
\end{definition}

We want to exploit the proof of Theorem \ref{multipliers} in the best possible way in order to establish
whether it may lead to an optimal rank-one Heffter array.

\begin{theorem}\label{optimality}
If an admissible pair $(m,n)$ is optimal, then there exists an optimal rank-one H$(m,n)$.
\end{theorem}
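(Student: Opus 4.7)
The plan is to revisit the construction in the proof of Theorem~\ref{agreeable} and choose the auxiliary parameters $m_1$ and $n_1$ as aggressively as possible, so that the subgroups $C^{2m_2n}$ and $C^{2mn_2}$, which automatically sit inside the stabilizers of the factors $X$ and $Y$, already generate a group of order equal to the universal upper bound $lcm(m_o,n_o)$.

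First I would translate the optimality hypothesis into an arithmetic statement: the pair $(m,n)$ is optimal if and only if there exist coprime odd integers $m_1>1$ and $n_1>1$ with $m_1\mid m_o$, $n_1\mid n_o$, and $m_1n_1=lcm(m_o,n_o)$. Working prime by prime, such a coprime factorisation of $lcm(m_o,n_o)$ amounts to assigning, for each prime $p\mid lcm(m_o,n_o)$, the full $p$-part of the lcm to whichever of $m_o,n_o$ has the maximal $p$-adic valuation. This assignment forces $m_1=1$ exactly when every prime dividing $m_o$ has strictly larger valuation in $n_o$, i.e., exactly when $m_o\cdot rad(m_o)\mid n_o$; symmetrically, $n_1=1$ is forced exactly when $n_o\cdot rad(n_o)\mid m_o$. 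The optimality hypothesis excludes both extremes and simultaneously guarantees agreeability, so a valid pair $(m_1,n_1)$ exists.

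Next I would feed this pair into the construction of Theorem~\ref{agreeable} to obtain a rank-one H$(m,n)$ whose factor $X$ is a union of $m_2$ cosets of $C^{2m_2n}$ (the subgroup of $\mathbb{F}_q^*$ of order $m_1$) and whose factor $Y$ is a union of $n_2$ cosets of $C^{2mn_2}$ (the subgroup of order $n_1$). Consequently, the $\mathbb{F}_q^*$-stabilizers $S$ of $X$ and $T$ of $Y$ contain $C^{2m_2n}$ and $C^{2mn_2}$, respectively, and Proposition~\ref{multipliers} then yields
\[
|M| \,=\, |S\cdot T| \,\geq\, |C^{2m_2n}\cdot C^{2mn_2}| \,=\, m_1n_1 \,=\, lcm(m_o,n_o),
\]
where the middle equality uses Lemma~\ref{lemma}(iii) together with $\gcd(m_1,n_1)=1$. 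Since Proposition~\ref{multipliers} also supplies the reverse inequality $|M|\leq lcm(m_o,n_o)$, equality must hold and the array is optimal.

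The main obstacle will be the prime-by-prime bookkeeping in the first step: one has to verify that the two divisibility conditions appearing in the definition of an optimal pair are precisely the obstructions to realising $lcm(m_o,n_o)$ as a coprime product $m_1n_1$ with both factors $>1$ and dividing $m_o$, $n_o$ respectively. Once this translation is in place, nothing further is needed: the construction of Theorem~\ref{agreeable} already produces factors whose stabilizers contain subgroups of the correct sizes, and the two inequalities on $|M|$ supplied by Proposition~\ref{multipliers} pinch its order to $lcm(m_o,n_o)$.
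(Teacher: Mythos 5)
Your proposal is correct and follows essentially the same route as the paper: reuse the construction of Theorem~\ref{agreeable} with $(m_1,n_1)$ chosen so that $m_1n_1=lcm(m_o,n_o)$, note that the stabilizers of the factors contain the subgroups of orders $m_1$ and $n_1$ so that Proposition~\ref{multipliers} pinches $|M|$ to $lcm(m_o,n_o)$, and reduce everything to the prime-by-prime realisability of such a coprime pair. The only point where the paper is more careful is the case $m_o=n_o$, where the two divisibility conditions are vacuous and it is agreeability alone that supplies two distinct prime factors of $m_o$ to split between $m_1$ and $n_1$; your remark that optimality "simultaneously guarantees agreeability" covers this, but it deserves to be spelled out as a separate case as the paper does.
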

\begin{proof}
Let $A$ be the rank-one H$(m,n)$ constructed in Theorem \ref{agreeable},
let $X$, $Y$ be its factors and let $S$, $T$ be their respective $\mathbb{F}_q^*$-stabilizers.
Recall that $X$ is a union of cosets of the $m_1$-th roots of unity and that $Y$ is a union of cosets of the $n_1$-th roots 
of unity. Then, by Proposition \ref{stab}, the orders of $S$ and $T$ are
divisible by $m_1$ and $n_1$, respectively.
Recalling that the group of multipliers of $A$ is $S\cdot T$ by Proposition \ref{multipliers}, 
we deduce that its order is at least equal to $m_1n_1$. 

Thus, to prove the assertion it is enough to show that $lcm(m_o,n_o)$ can be written as a product 
$m_1n_1$ with $m_1>1$ a divisor of $m_0$, $n_1>1$ a divisor of $n_0$, and $\gcd(m_1,n_1)=1$.

Consider first the case $m_o=n_o$. Here, to say that $(m,n)$ is optimal simply means that
$m_o$ has at least two distinct prime factors. 
Let $p$ be one of them and let $p^\alpha$ be the largest
power of $p$ dividing $m_o$. Then $m_1=p^\alpha$ and $n_1={m_o\over p^\alpha}$ satisfy the 
requirement.

Now assume that $m_o<n_o$. 
Let $\{p_1,\dots,p_t\}$ be the set of prime divisors of $m_on_o$,
let $p_i^{\alpha_i}$ be the largest power of $p_i$ dividing $m_o$, and let
$p_i^{\beta_i}$ be the largest power of $p_i$ dividing $n_o$.
Now let $I$ be the set of $i$'s such that $\alpha_i\geq \beta_i$ and let $J$ be the complement of $I$
in $\{1,\dots,t\}$, i.e., the set of $j$'s such that $\alpha_j<\beta_j$.
It is obvious that $lcm(m_o,n_o)=\prod_{i=1}^t p_i^{\max(\alpha_i,\beta_i)}$, hence
we can write $lcm(m_o,n_o)$ as a product of a divisor $m_1$ of $m_o$ and a divisor $n_1$ of $n_o$ as follows
$$lcm(m,n)=m_1n_1 \quad{\rm with}\quad m_1=\prod_{i\in I}p_i^{\alpha_i} \quad{\rm and}\quad n_1=\prod_{j\in J}p_j^{\beta_j}.$$
It is also obvious that $m_1$ and $n_1$ are coprime. It remains to show that $m_1$ and $n_1$ are both greater than 1. 
Indeed, if $m_1=1$, then $I$ would be empty
which means that, for every $h$, the exponent of $p_h$ in the prime factorization of $m_o$ is 
strictly less than the exponent of $p_h$ in the prime factorization of $n_o$. 
This clearly implies that $m_o\cdot rad(m_o)$ is a divisor of $n_o$
contradicting that $(m,n)$ is optimal. 
Also, if $n_1=1$ then $J$ would be empty which means that, for every $h$, the  exponent of $p_h$ in the prime factorization of $m_o$ is
greater than or equal to the exponent 
of $p_h$ in the prime factorization of $n_o$. This would imply that $m_o\geq n_o$ against the assumption. 
We conclude that $m_1$, $n_1$ satisfy the requirement.

The remaining case $m_o>n_o$ can be proved exactly in the same way by inverting the roles of $m_o$ and $n_o$.
\end{proof}

Note that the agreeable pairs which are not optimal are, up to the order, of the form $(2^\alpha\nu\rho,2^\beta\nu)$
where $\nu$, $\rho$ are odd integers such that $\nu$ has at least two distinct prime divisors and $rad(\rho)=rad(\nu)$.
The least admissible pair which is agreeable but not optimal is $(21^2,21)$. Indeed the pair $(15^2,15)$ is not admissible
since we have $2\cdot15^3+1=43\cdot157$.

\section{Conclusion}

We have constructed, explicitly, a rank-one H$(m,n)$ for any admissible agreeable pair $(m,n)$. 

The existence question for rank-one Heffter arrays remains open for admissible {\it disagreeable} pairs, which are
precisely the pairs $(m,n)$ where either one of the two parameters is equal to a power of 2, or their product has exactly one odd prime factor. 
Apart from some examples where $m$ and $n$ are ``small" (see, e.g,. Example \ref{25}), 
we doubt that rank-one ``disagreeable" Heffter arrays are obtainable by means of a very explicit construction. 
Yet, we strongly believe that their existence can be proved in any admissible case by means of some tools
which are a little bit more sophisticated than those used in this paper. We hope to find the final proof in a future work. 

\normalsize
\section*{Acknowledgement}
The author would like to thank Anita Pasotti for helpful discussions on the topic and
the anonymous referee who pointed out that the perfect rank-one Heffter arrays
of Corollary \ref{perfect} are globally simple.

This work has been performed under the auspices of the G.N.S.A.G.A. of the
C.N.R. (National Research Council) of Italy.

\end{document}